\documentclass[reqno,11pt,centertags]{amsart}
\usepackage{geometry}
\geometry{left=3.5cm, right=3.5cm, top=4.5cm, bottom=4.5cm}
\usepackage{amsmath,amsthm,amscd,amssymb,latexsym,upref,stmaryrd}
\usepackage{graphicx}
\usepackage{hyperref}
\usepackage{enumitem}
\usepackage{xcolor}
\newcommand*{\mailto}[1]{\href{mailto:#1}{\nolinkurl{#1}}}
\usepackage[numbers,sort&compress]{natbib}




\newcommand{\beq}{\begin{equation}}
	\newcommand{\eeq}{\end{equation}}
\newcommand{\ba}{\begin{align}}
	\newcommand{\ea}{\end{align}}





 \allowdisplaybreaks
\numberwithin{equation}{section}


\newtheorem{theorem}{Theorem}[section]
\newtheorem{lemma}[theorem]{Lemma}

\theoremstyle{definition}

\begin{document}
	
	
	\title[Ambarzumyan-type theorem]
	{Ambarzumyan-type theorem for vectorial Sturm-Liouville operator with impulses}
	
	\author[F.~Wang]{Feng Wang}
	\address{School of Mathematics and Statistics, Nanjing University of
		Science and Technology, Nanjing, 210094, Jiangsu, China}
	\email{\mailto{wangfengmath@njust.edu.cn}}
	
	\author[C.~F.~Yang]{CHUAN-FU Yang}
	\address{Department of Mathematics, School of Mathematics and Statistics, Nanjing University of
		Science and Technology, Nanjing, 210094, Jiangsu, People's
		Republic of China}
	\email{\mailto{chuanfuyang@njust.edu.cn}}

	\subjclass[2000]{34A55; 34B24; 47E05}
	\keywords{Vector-impulsive Sturm-Liouville operator; Ambarzumyan-type theorem; Eigenvalue.}
	\date{\today}
	
	\begin{abstract}
{We consider the vector-impulsive Sturm-Liouville problem with Neumann conditions. The Ambarzumyan$^{\textbf{,}}$s theorem for the problem is proved, which states that if the eigenvalues of the problem coincide with those of the zero potential, then the potential is zero.}
	\end{abstract}
	
	\maketitle
	
\section{introduction}
	 It is well known that the paper \cite{Am} of  Ambarzumyan may be thought of as the first paper in the theory of inverse spectral problems associated with Sturm-Liouville operators. \cite{Am} stated the following theorem:

\emph{If} $q$ \emph{is a smooth real-valued function, and} $\{n^{2}:n=0,1,2,\ldots\}$ \emph{is the set of eigenvalues of the boundary value problem}
\begin{align}
-y''+q(x)y=\lambda y,\; x\in(0,\pi),\quad y'(0)=y'(\pi)=0, \nonumber
\end{align}
\emph{then} $q(x)=0$ on $[0,\pi]$.

This theorem is called Ambarzumyan$^{\textbf{,}}$s theorem, and has been generalized in many directions. Without a claim to completeness we mention here the papers [2-9, 11-13], etc. In particular, the most recent paper \cite{ZY} extended the Ambarzumyan$^{\textbf{,}}$s theorem for the classical Sturm-Liouville operator to the scalar impulsive Sturm-Liouville operator with Neumann conditions. Our interest is to extend the result of the paper \cite{ZY} to the vector case.

In this paper we consider the following vectorial Sturm-Liouville operator $L(Q)$ with impulses
\begin{align}
lY:=-Y''+Q(x)Y=\lambda\rho(x)Y, \quad x\in \left(0,\frac{\pi}{2}\right)\cup\left(\frac{\pi}{2},\pi\right), \label{intro1}
\end{align}
with the jump conditions
\begin{align}
Y\left(\frac{\pi}{2}+0\right)=aY\left(\frac{\pi}{2}-0\right), \quad Y'\left(\frac{\pi}{2}+0\right)=a^{-1}Y'\left(\frac{\pi}{2}-0\right), \!\!\!\!\!\!\!\!\! \label{intro2}
\end{align}
and Neumann conditions
\begin{align}
Y'(0)=Y'(\pi)=0, \label{intro3}
\end{align}
where 
\begin{align}
\rho(x)=\begin{cases}
          1,\quad\:\: 0<x<\frac{\pi}{2},\\
          \alpha^{2},\quad\frac{\pi}{2}<x<\pi,
        \end{cases}
0<\alpha<1, \nonumber
\end{align}
$a>0$ and $Q$ is an $N\times N$ real symmetric matrix-valued function and $Q\in L^{2}_{N\times N}(0,\pi)$, $Y$ is a column vector-valued function with $N$ components, $N$ is a positive integer, $\lambda$ is a complex parameter. Clearly, $Q\in L^{2}_{N\times N}(0,\pi)$ if and only if each entry of the matrix $Q$ belongs to $L^{2}(0,\pi)$.

Note that Ambarzumyan$^{\textbf{,}}$s theorem for the problem (\ref{intro1})-(\ref{intro3}) with $\rho\equiv1$ and $a=1$, for the continuous potential $Q$, has been obtained in \cite{CS}. To our best knowledge, Ambarzumyan$^{\textbf{,}}$s theorem for the problem (\ref{intro1})-(\ref{intro3}) hasn't been studied.

 It is easy to verify that $L(Q)$ is a self-adjoint eigenvalue problem. Denote the set of eigenvalues  by $\sigma(Q)=\{\lambda_{n}(Q)\}_{n=0}^{\infty}$, which can be arranged in an ascending order as (counted with multiplicity)
\begin{align}
\lambda_{0}(Q) \leq \lambda_{1}(Q) \leq \cdots  \leq\lambda_{n}(Q) \leq \cdots \longrightarrow +\infty. \nonumber
\end{align}

The main result in this paper is as follows.
\begin{theorem}\label{th}
If $\lambda_{n}(Q)=\lambda_{n}(0)$ for all $n=0,1,2\cdots$, then $Q(x)=0$ a.e. $x\in(0,\pi)$.
\end{theorem}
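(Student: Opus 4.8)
The plan is to conjugate the problem by a matrix solution for the eigenvalue $0$, reduce the hypothesis to the vanishing of a weighted mean of $Q$, and then read that mean off from the asymptotic distribution of the eigenvalues. First, for $Q=0$ the system decouples into $N$ copies of the scalar impulsive problem, whose lowest eigenvalue is $0$ with eigenfunction equal to a constant $c$ on $(0,\frac{\pi}{2})$ and to $ac$ on $(\frac{\pi}{2},\pi)$; these satisfy \eqref{intro1}--\eqref{intro3} for every $c\in\bbR^{N}$, so $\lambda_{0}(0)=\dots=\lambda_{N-1}(0)=0<\lambda_{N}(0)$. Under the hypothesis $0$ is thus an eigenvalue of $L(Q)$ whose eigenspace has dimension $\ge N$; pick linearly independent eigenfunctions $Y_{1},\dots,Y_{N}$ there and form the $N\times N$ solution $\Psi=(Y_{1}\ \dots\ Y_{N})$ of $-\Psi''+Q\Psi=0$ with $\Psi'(0)=\Psi'(\pi)=0$ and the jump conditions \eqref{intro2}. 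Since $Q=Q^{T}$, the bracket $W=\Psi'^{T}\Psi-\Psi^{T}\Psi'$ satisfies $W'=0$ on each subinterval, vanishes at $0$ (because $\Psi'(0)=0$), and is unchanged across $\frac{\pi}{2}$ (the factors $a$ and $a^{-1}$ cancel); hence $W\equiv0$ and $\Psi^{T}\Psi'$ is symmetric throughout.

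Next I would prove that $\Psi(x)$ is invertible for every $x$. If $\Psi(x_{0})c=0$ with $c\ne0$, then $Y:=\Psi c$ is a nonzero eigenfunction for $0$ with $Y(x_{0})=0$, and the truncation $\widetilde Y$ equal to $Y$ on $[0,x_{0}]$ and to $0$ on $[x_{0},\pi]$ lies in the form domain of the problem; integrating $-Y''+QY=0$ against $Y$ on $[0,x_{0}]$ and using $Y'(0)=0$, $Y(x_{0})=0$, $Y'(\pi)=0$ and the jump conditions shows $\int_{0}^{x_{0}}\big(\langle Y',Y'\rangle+\langle QY,Y\rangle\big)\,dx=0$, while $\int_{0}^{x_{0}}\rho\langle Y,Y\rangle\,dx>0$ by uniqueness; hence $\widetilde Y$ attains the minimal Rayleigh value $\lambda_{0}(Q)=0$ and is therefore itself an eigenfunction for $0$. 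Since $\widetilde Y$ vanishes beyond $x_{0}$, unique continuation (supplemented by the jump conditions when $x_{0}\in\{\frac{\pi}{2},\pi\}$) forces $Y\equiv0$, a contradiction. With $\Psi$ invertible, set $V:=\Psi'\Psi^{-1}$: then $V$ is symmetric, $V(0)=V(\pi)=0$, $V(\frac{\pi}{2}+0)=a^{-2}V(\frac{\pi}{2}-0)$, and the Riccati identity $V'=\Psi''\Psi^{-1}-(\Psi'\Psi^{-1})^{2}$ gives $Q=V'+V^{2}$.

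Now put $M:=\int_{0}^{\pi/2}Q\,dx+a^{2}\int_{\pi/2}^{\pi}Q\,dx$. Integrating $Q=V'+V^{2}$ over $(0,\frac{\pi}{2})$ and $(\frac{\pi}{2},\pi)$, the $V'$-terms contribute $V(\frac{\pi}{2}-0)+a^{2}\big(V(\pi)-V(\frac{\pi}{2}+0)\big)=V(\frac{\pi}{2}-0)-V(\frac{\pi}{2}-0)=0$, so
\begin{align}
M=\int_{0}^{\pi/2}V^{2}\,dx+a^{2}\int_{\pi/2}^{\pi}V^{2}\,dx\succeq0, \nonumber
\end{align}
because $V^{2}=V^{T}V$ is positive semidefinite. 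It then suffices to show $\Tr M=0$: this gives $\int_{0}^{\pi/2}\Tr(V^{2})\,dx=\int_{\pi/2}^{\pi}\Tr(V^{2})\,dx=0$, and since $\Tr(V^{2})\ge0$ with equality only for $V=0$, the absolutely continuous $V$ vanishes identically, whence $Q=V'+V^{2}=0$ a.e. Finally, $\Tr M=0$ is forced by the eigenvalue asymptotics: the spectrum of $L(Q)$ breaks into $N$-element clusters around the simple eigenvalues of the scalar unperturbed problem, and the total shift of the $n$-th cluster tends, as $n\to\infty$, to a fixed positive multiple of $\Tr M$; by hypothesis this shift is identically zero, hence $\Tr M=0$.

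I expect this last step to be the real obstacle: establishing the second-order eigenvalue asymptotics for the impulsive vectorial problem — equivalently, expanding the characteristic determinant of $L(Q)$ to the order that exhibits $\Tr M$ and comparing it with that of $L(0)$ — requires transporting the fundamental system carefully across the jump at $x=\frac{\pi}{2}$, with the $\alpha^{2}$-weight and the factor $a$ both entering the computation. The remaining points (that truncations lie in the form domain, that minimizers of the Rayleigh quotient are genuine eigenfunctions, and the absolute continuity of $V$) are routine once the self-adjoint realization behind $\sigma(Q)$ has been fixed.
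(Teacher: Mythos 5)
Your endgame, after the trace identity, is essentially a (matrix-strengthened) variant of what the paper does: the paper also reduces the whole theorem to the single scalar condition $\Tr\big(\int_{0}^{\pi/2}Q\,dx+a^{2}\int_{\pi/2}^{\pi}Q\,dx\big)=0$, i.e. your $\Tr M=0$ (in the paper's notation $\Tr[Q_{1}]+a^{2}\Tr[Q_{2}]=0$), and then exploits non-negativity at the bottom of the spectrum — there via the piecewise-constant test vectors $e_{i}$, $ae_{i}$ in the Rayleigh quotient, concluding that each such vector is a ground state and hence $Q(x)e_{i}=0$ a.e. Your route through the $N$-dimensional null eigenspace, invertibility of $\Psi$ (the truncation-plus-unique-continuation argument is fine for interior $x_{0}$; at $x_{0}=\pi$ or $0$ you need the direct Cauchy-uniqueness argument instead, since truncation gives nothing there), the Riccati identity $Q=V'+V^{2}$ and positivity of $\int V^{2}$ is sound, though more elaborate than needed.

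The genuine gap is exactly the step you defer: $\Tr M=0$ is never proved, and the mechanism you invoke — that the spectrum splits into $N$-element clusters whose total shift converges to a fixed positive multiple of $\Tr M$ — is not available here and is very likely false as stated. From \eqref{2.10}, $\omega_{Q}(\lambda)-\omega_{0}(\lambda)=(\sqrt\lambda)^{N-1}\big[G(\lambda)+\psi(\lambda)\big]$, where $G$ mixes $\Tr[Q_{1}]$ and $\Tr[Q_{2}]$ with weights involving $a$, $a^{-1}$, $\alpha$ and with the oscillatory factors $\cos\frac{\sqrt\lambda\pi(\alpha\pm1)}{2}$, $\sin\frac{\sqrt\lambda\pi(\alpha\pm1)}{2}$; consequently the first-order eigenvalue shifts depend on $n$ in an oscillatory way governed by the arithmetic of $\alpha$ (rational versus irrational), and there is no single limiting ``cluster shift'' proportional to $\Tr M$ — indeed even simplicity/separation of the zeros of $\omega_{0}$, needed to speak of clusters, is unclear for general $\alpha$, $a$. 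What replaces this in the paper is the analytic core your proposal omits: $\omega_{Q}$ is entire of order $\tfrac12$, so Hadamard factorization together with the $\sqrt\lambda=i\kappa$ asymptotics \eqref{3.1.1} shows the spectrum determines $\omega_{Q}$ (Lemma \ref{lem1}); the hypothesis then yields the functional identity $G(\lambda)+\psi(\lambda)\equiv0$, which is evaluated along $\sqrt\lambda=2n$ (with a rational/irrational case analysis, and along $\sqrt\lambda=8n+1$ when $\alpha=\tfrac12$) to get $\Tr[Q_{2}]=0$, and then along $\sqrt\lambda=i\kappa\to i\infty$ to get $\Tr[Q_{1}]=0$ (Lemma \ref{lem2}) — more than enough for your $\Tr M=0$. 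Until you supply an argument of this precision (a quantitative comparison of $\omega_{Q}$ with $\omega_{0}$, or a correct second-order eigenvalue asymptotic for the impulsive two-density vectorial problem), the proof does not close.
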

	
\section{Analysis of the characteristic function}
	
	In this section we analyze the characteristic function of the problem $L(Q)$, which plays a key role in the proof of Theorem \ref{th}.

Let $Y_{1}(x)=Y(x)|_{(0,\frac{\pi}{2})}$, $Y_{2}(x)=Y(\pi-x)|_{(0,\frac{\pi}{2})}$, $Q_{1}(x)=Q(x)|_{(0,\frac{\pi}{2})}$, $Q_{2}(x)=Q(\pi-x)|_{(0,\frac{\pi}{2})}$.
Then the problem $L(Q)$ can be rewritten as

\begin{align}
-Y_{1}''+Q_{1}(x)Y_{1}=\lambda Y_{1}, \quad\quad x\in \left(0,\frac{\pi}{2}\right),\label{2.1}\\
-Y_{2}''+Q_{2}(x)Y_{2}=\lambda \alpha^{2}Y_{2}, \quad x\in \left(0,\frac{\pi}{2}\right),\label{2.2}
\end{align}
with
\begin{align}
aY_{1}\left(\frac{\pi}{2}\right)-Y_{2}\left(\frac{\pi}{2}\right)=0,\quad a^{-1}Y_{1}'\left(\frac{\pi}{2}\right)+Y_{2}'\left(\frac{\pi}{2}\right)=0,\label{2.3}
\end{align}
and
\begin{align}
Y_{1}'(0)=0=Y_{2}'(0).
\end{align}

Respectively, let $\Phi_{1}(x,\lambda)$ and $\Phi_{2}(x,\lambda)$ be the solutions of the matrix differential equations
\begin{align}
-\Phi''+Q_{1}(x)\Phi=\lambda\Phi \quad \nonumber
\end{align}
and
\begin{align}
-\Phi''+Q_{2}(x)\Phi=\lambda\alpha^{2}\Phi \nonumber
\end{align}
satisfying the initial conditions
\begin{align}
\Phi(0)=I,\quad \Phi'(0)=0,\nonumber
\end{align}
where $I$ is the $N\times N$ identity matrix. Then any solution $Y_{1}(x,\lambda)$ of the equation (\ref{2.1}) satisfying $Y_{1}'(0)=0$ can be expressed as
\begin{align}
Y_{1}(x,\lambda)=\Phi_{1}(x,\lambda)C_{1}(\lambda),\label{2.4}
\end{align}
and any solution $Y_{2}(x,\lambda)$ of the equation (\ref{2.2}) satisfying $Y_{2}'(0)=0$ can also be expressed as
\begin{align}
Y_{2}(x,\lambda)=\Phi_{2}(x,\lambda)C_{2}(\lambda),\label{2.5}
\end{align}
where $C_{j}(\lambda)\; (j=1,2)$ is arbitrary $N\times 1 $ constant vector only depending on $\lambda$.

Using (\ref{2.4}) and (\ref{2.5}) in (\ref{2.3}) we obtain
\begin{align}
\begin{cases}
a\Phi_{1}(\frac{\pi}{2},\lambda)C_{1}(\lambda)-\Phi_{2}(\frac{\pi}{2},\lambda)C_{2}(\lambda)=0,\\
a^{-1}\Phi_{1}'(\frac{\pi}{2},\lambda)C_{1}(\lambda)+\Phi_{2}'(\frac{\pi}{2},\lambda)C_{2}(\lambda)=0.
\end{cases} \nonumber
\end{align}
If the above system has nonzero solutions with respect to variables $C_{1}(\lambda)$ and $C_{2}(\lambda)$, then the coefficient determinant of the system must be vanished. Thus,  the characteristic equation of the problem $L(Q)$ is given as
\begin{align}
det\left(
     \begin{array}{cc}
       a\Phi_{1}(\frac{\pi}{2},\lambda) & -\Phi_{2}(\frac{\pi}{2},\lambda) \\
       a^{-1}\Phi_{1}'(\frac{\pi}{2},\lambda) & \Phi_{2}'(\frac{\pi}{2},\lambda) \\
     \end{array}
   \right)=0. \nonumber
\end{align}
The determinant
\begin{align}
\omega_{Q}(\lambda):=det\left(
     \begin{array}{cc}
       a\Phi_{1}(\frac{\pi}{2},\lambda) & -\Phi_{2}(\frac{\pi}{2},\lambda) \\
       a^{-1}\Phi_{1}'(\frac{\pi}{2},\lambda) & \Phi_{2}'(\frac{\pi}{2},\lambda) \\
     \end{array}
     \right) \label{2.7}
\end{align}
is called the characteristic function of the problem $L(Q)$, and its zeros coincide with the eigenvalues of $L(Q)$.

We now derive the asymptotic of the characteristic function $\omega_{Q}(\lambda)$. In the following we use $M_{ii}$ to represent the entry of matrix $M$ at the $i$-st row and $i$-st column, and $trM$ to represent the trace of matrix $M$. Note that (Ref. \cite{YHY2})
\begin{align}\label{2.8}
\begin{cases}
\Phi_{1}\left(\frac{\pi}{2},\lambda\right)=\cos\Big(\frac{\sqrt{\lambda}\pi}{2}\Big)I+
\frac{\sin(\frac{\sqrt{\lambda}\pi}{2})}{\sqrt{\lambda}}[Q_{1}]
+\frac{\Psi_{1}(\lambda)}{\sqrt{\lambda}},\qquad\qquad\;\;\:\,\\
\Phi_{1}'\left(\frac{\pi}{2},\lambda\right)=-\sqrt{\lambda}\sin\Big(\frac{\sqrt{\lambda}\pi}{2}\Big)I+
\cos\Big(\frac{\sqrt{\lambda}\pi}{2}\Big)[Q_{1}]+\Psi_{2}(\lambda),\qquad\\
\Phi_{2}\left(\frac{\pi}{2},\lambda\right)=\cos\Big(\frac{\sqrt{\lambda}\pi\alpha}{2}\Big)I+
\frac{\sin(\frac{\sqrt{\lambda}\pi\alpha}{2})}{\sqrt{\lambda}\alpha}[Q_{2}]
+\frac{\Psi_{3}(\lambda)}{\sqrt{\lambda}},\qquad\quad\;\\
\Phi_{2}'\left(\frac{\pi}{2},\lambda\right)=-\sqrt{\lambda}\alpha\sin\Big(\frac{\sqrt{\lambda}\pi\alpha}{2}\Big)I+
\cos\Big(\frac{\sqrt{\lambda}\pi\alpha}{2}\Big)[Q_{2}]+\Psi_{4}(\lambda),\!\!\!
\end{cases}\!\!\!\!\!\!\!\!\!\!\!\!\!\!\!\!\!\!
\end{align}
where $\Psi_{1},\Psi_{2}\in \mathcal{L}_{N\times N}^{\frac{\pi}{2}}$, $\Psi_{3},\Psi_{4}\in \mathcal{L}_{N\times N}^{\frac{\alpha\pi}{2}}$ ( $\mathcal{L}_{N\times N}^{b}$ is the class of matrix-valued entire functions of order $\frac{1}{2}$ and exponential type $\leq b$, belonging to $L_{N\times N}^{2}(\mathbb{R})$ for real $\lambda$), $[Q_{j}]=\frac{1}{2}\int_{0}^{\frac{\pi}{2}}Q_{j}(x)dx, \; j=1,2$.

Using (\ref{2.8}) in (\ref{2.7}) and with the help of the Laplace expansion of determinants, we have
\begin{align}\label{2.10}
\omega_{Q}(\lambda)=(\sqrt{\lambda})^{N}det\left(
     \begin{array}{cc}
       a\Phi_{1}(\frac{\pi}{2},\lambda) & -\Phi_{2}(\frac{\pi}{2},\lambda) \\
       a^{-1}\frac{\Phi_{1}'(\frac{\pi}{2},\lambda)}{\sqrt{\lambda}} & \frac{\Phi_{2}'(\frac{\pi}{2},\lambda)}{\sqrt{\lambda}} \\
     \end{array}
     \right)\nonumber\qquad\qquad\quad\quad\;\\
=(\sqrt{\lambda})^{N}\Bigg\{\prod_{i=1}^{N}\Big[a\cos\frac{\sqrt{\lambda}\pi}{2}
+a\frac{\sin\frac{\sqrt{\lambda}\pi}{2}}{\sqrt{\lambda}}[Q_{1}]_{ii}
+\frac{[\Psi_{1}(\lambda)]_{ii}}{\sqrt{\lambda}}\Big]\!\nonumber\\
\times\prod_{i=1}^{N}\Big[-\alpha\sin\frac{\sqrt{\lambda}\pi\alpha}{2}
+\frac{\cos\frac{\sqrt{\lambda}\pi\alpha}{2}}{\sqrt{\lambda}}[Q_{2}]_{ii}
+\frac{[\Psi_{4}(\lambda)]_{ii}}{\sqrt{\lambda}}\Big]\quad\quad \\
+(-1)^{N}\prod_{i=1}^{N}\Big[-a^{-1}\sin\frac{\sqrt{\lambda}\pi}{2}
+a^{-1}\frac{\cos\frac{\sqrt{\lambda}\pi}{2}}{\sqrt{\lambda}}[Q_{1}]_{ii}
+\frac{[\Psi_{2}(\lambda)]_{ii}}{\sqrt{\lambda}}\Big] \!\!\!\!\!\!\!\!\!\!\!\!\nonumber\\
\times\prod_{i=1}^{N}\Big[-\cos\frac{\sqrt{\lambda}\pi\alpha}{2}
-\frac{\sin\frac{\sqrt{\lambda}\pi\alpha}{2}}{\sqrt{\lambda}\alpha}[Q_{2}]_{ii}
+\frac{[\Psi_{3}(\lambda)]_{ii}}{\sqrt{\lambda}}\Big]+\frac{\psi(\lambda)}{\lambda}\Bigg\}\!\!\!\!\!\!\!\!\nonumber\\
:=\omega_{0}(\lambda)+(\sqrt{\lambda})^{N-1}\big[G(\lambda)+\psi(\lambda)\big], \qquad\qquad\qquad\qquad\quad\;\nonumber
\end{align}
where $\psi\in \mathcal{L}^{\frac{N\pi(1+\alpha)}{2}}$ ($\mathcal{L}^{b}$ is the class of scalar entire functions of order $\frac{1}{2}$ and exponential type $\leq b$, belonging to $L^{2}(\mathbb{R})$ for real $\lambda$),
\begin{align}
\omega_{0}(\lambda)=(\sqrt{\lambda})^{N}\Big[\left(-\frac{\alpha a}{2}\right)^{N}
\Big(\sin\frac{\sqrt{\lambda}\pi(\alpha+1)}{2}+\sin\frac{\sqrt{\lambda}\pi(\alpha-1)}{2}\Big)^{N}\;\nonumber\\
+\left(-\frac{a^{-1}}{2}\right)^{N}\Big(\sin\frac{\sqrt{\lambda}\pi(\alpha+1)}{2}
-\sin\frac{\sqrt{\lambda}\pi(\alpha-1)}{2}\Big)^{N}\Big],\quad\quad\;\:\nonumber\\
G(\lambda)=\small{A(\lambda)\left(-\frac{\alpha a}{2}\right)^{N-1}
\Big(\sin\frac{\sqrt{\lambda}\pi(\alpha+1)}{2}+\sin\frac{\sqrt{\lambda}\pi(\alpha-1)}{2}\Big)^{N-1}}
\!\!\nonumber\\
+\small{B(\lambda)\left(-\frac{a^{-1}}{2}\right)^{N-1}\Big(\sin\frac{\sqrt{\lambda}\pi(\alpha+1)}{2}
-\sin\frac{\sqrt{\lambda}\pi(\alpha-1)}{2}\Big)^{N-1}}\!\!\!\!\!\!\,\nonumber
\end{align}
with
\begin{align}
A(\lambda)=\frac{a\, tr[Q_{2}]+\alpha a\, tr[Q_{1}]}{2}\cos\frac{\sqrt{\lambda}\pi(\alpha+1)}{2}\qquad\qquad\nonumber\\
+\frac{a\, tr[Q_{2}]-\alpha a\, tr[Q_{1}]}{2}\cos\frac{\sqrt{\lambda}\pi(\alpha-1)}{2},\nonumber\qquad\qquad\\
B(\lambda)=\frac{\alpha a^{-1}tr[Q_{1}]+a^{-1}tr[Q_{2}]}{2\alpha}\cos\frac{\sqrt{\lambda}\pi(\alpha+1)}{2}\qquad\;\nonumber\\
+\frac{\alpha a^{-1}tr[Q_{1}]-a^{-1}tr[Q_{2}]}{2\alpha}\cos\frac{\sqrt{\lambda}\pi(\alpha-1)}{2}.\qquad\;\nonumber
\end{align}
In fact, the first term $\omega_{0}(\lambda)$ is the characteristic function of the problem $L(0)$. By the Palay-Wiener$^{\textbf{,}}$s theorem we find that for $\lambda\in\mathbb{R}$,
\begin{align}
\psi(\lambda)=o(1),\quad \lambda\rightarrow+\infty,\nonumber
\end{align}
and for $\sqrt{\lambda}=i\kappa(\kappa>0)$,
\begin{align}\label{2.10.1}
\psi(\lambda)=o\left(e^{\frac{\kappa N\pi(1+\alpha)}{2}}\right),\quad \kappa\rightarrow+\infty.
\end{align}

\section{Proof}
	
	In this section we give the proof of Theorem \ref{th}, for which we need the following lemmas.

\begin{lemma}\label{lem1}
The characteristic function $\omega_{Q}(\lambda)$ of the problem $L(Q)$ is uniquely determined by $\sigma(Q)=\{\lambda_{n}(Q)\}_{n=0}^{\infty}$ (counted with multiplicity).
\end{lemma}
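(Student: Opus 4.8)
The plan is to show that $\omega_Q(\lambda)$ is a canonical-product-type entire function whose zero set is exactly $\sigma(Q)$, so that the product formula reconstructs it from the eigenvalues up to a constant, and then to pin down the constant via the known asymptotics established in Section 2. First I would record the structural facts about $\omega_Q$: from \eqref{2.10} it is entire in $\lambda$, of order $1/2$, since each $\Phi_j, \Phi_j'$ is entire of order $1/2$ in $\lambda$ and the determinant is a finite sum of products of $2N$ such entries. Its zeros are real (the problem $L(Q)$ is self-adjoint) and, counted with multiplicity, coincide with $\sigma(Q)=\{\lambda_n(Q)\}_{n\ge 0}$, which was noted right after \eqref{2.7}.

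Next I would invoke Hadamard's factorization theorem: an entire function of order $1/2$ has genus $0$, so
\begin{align}
\omega_Q(\lambda)=C\lambda^{m}\prod_{\lambda_n(Q)\neq 0}\Big(1-\frac{\lambda}{\lambda_n(Q)}\Big),\nonumber
\end{align}
where $m\ge 0$ is the multiplicity of $0$ as an eigenvalue and $C$ is a nonzero constant; the product converges because $\sum 1/|\lambda_n(Q)|<\infty$ for eigenvalues growing like $n^2$ (up to a constant depending on $\alpha$), which follows from the asymptotics of $\omega_0$. This already shows that the sequence $\sigma(Q)$ determines $\omega_Q$ up to the single scalar $C$ (and the reading of $m$ off the list, which is part of $\sigma(Q)$). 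It therefore remains to argue that $C$ itself is forced by $\sigma(Q)$. For this I would use the leading asymptotic term: from \eqref{2.10}, along the ray $\sqrt{\lambda}=i\kappa$, $\kappa\to+\infty$, the dominant term of $\omega_Q(\lambda)$ is $\omega_0(\lambda)$, whose explicit form is displayed and which grows like a fixed constant times $\kappa^{N}e^{\kappa N\pi(1+\alpha)/2}$ (the $G$ and $\psi$ contributions are lower order by \eqref{2.10.1} and the exponential type bookkeeping). On the other hand, the Hadamard product evaluated along the same ray has, by standard estimates for canonical products of order $1/2$ with real zeros, an asymptotic growth rate governed entirely by the counting function of $\{\lambda_n(Q)\}$; matching the two expressions determines $C$ in terms of the $\lambda_n(Q)$ alone.

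The main obstacle I expect is the last step: making precise the claim that the multiplicative constant $C$ is recovered from the eigenvalue asymptotics. One clean way is to note that, since $\lambda_n(Q)=\lambda_n(0)$ in the intended application is not yet assumed here (this lemma is stated for general $Q$), one should instead argue abstractly: two entire functions of order $1/2$ with the same zeros (with multiplicity) differ by $e^{g(\lambda)}$ with $g$ a polynomial of degree $\le 0$, hence by a constant $c$; then comparing the behavior of $\omega_Q$ and of the candidate product as $\lambda\to+\infty$ along the real axis — where $\omega_0(\lambda)$ is bounded and genuinely oscillatory with a fixed amplitude, while the normalized canonical product has a determined amplitude — forces $c$ to be the specific value read off from $\sigma(Q)$. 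Alternatively, and perhaps more transparently, one can sidestep the constant entirely by observing that in the proof of Theorem \ref{th} only the ratio $\omega_Q/\omega_0$ matters, but since the lemma is stated as an exact determination I would carry through the asymptotic matching argument above. I would also take care that the convergence-exponent discussion genuinely yields order $1/2$ and genus $0$, citing the asymptotic behavior of $\lambda_n(Q)$ implied by \eqref{2.8}–\eqref{2.10}.
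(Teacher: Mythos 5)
Your proposal is correct and follows essentially the same route as the paper: Hadamard factorization of the order-$\tfrac12$ entire function $\omega_Q$ gives the canonical product up to a constant $C(Q)$, and $C(Q)$ is then recovered by matching the asymptotics along $\sqrt{\lambda}=i\kappa$, where by \eqref{3.1.1} the leading term $\kappa^{N}e^{\kappa N\pi(1+\alpha)/2}\bigl[(\alpha a/4)^{N}+(a^{-1}/4)^{N}\bigr]$ is independent of $Q$, so the constant is expressed purely in terms of the eigenvalues as in \eqref{3.2}. The paper carries out exactly the asymptotic-matching step you outline (along the imaginary ray in $\sqrt{\lambda}$, not the real axis), so your main line of argument coincides with its proof.
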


\begin{proof}
From (\ref{2.10}), we observe that $\omega_{Q}(\lambda)$ is entire in $\lambda$ of order $\frac{1}{2}$. Hence, by Hadamard$^{\textbf{,}}$s factorization theorem,  $\omega_{Q}(\lambda)$ is uniquely determined up to a multiplicative constant $C(Q)$ by its zeros:
\begin{align}\label{3.1}
\omega_{Q}(\lambda)=C(Q)\lambda^{m}\prod_{\{n:\lambda_{n}(Q)\neq0\}}\left(1-\frac{\lambda}{\lambda_{n}(Q)}\right),
\end{align}
where $m$ is the multiplicity of zero eigenvalues (If zero isn't an eigenvalue, then $m=0$). Substituting $\sqrt{\lambda}=i\kappa(\kappa>0)$ into (\ref{2.10}) yields that as $\kappa\rightarrow+\infty$,
\begin{align}
\omega_{Q}(-\kappa^{2})=\omega_{0}(-\kappa^{2})+(i\kappa)^{N-1}\left[G(-\kappa^{2})+\psi(-\kappa^{2})\right]\quad\;\;\;\nonumber\\
=\kappa^{N}e^{\frac{\kappa N\pi(1+\alpha)}{2}}\left[\left(\frac{\alpha a}{4}\right)^{N}+\left(\frac{a^{-1}}{4}\right)^{N}\right]\big[1+o(1)\big],\!\!\!\!\!\!\label{3.1.1}
\end{align}
where $G(-\kappa^{2})=O\left(e^{\frac{\kappa N\pi(1+\alpha)}{2}}\right)$ and (\ref{2.10.1}) are used.

Hence, from (\ref{3.1}) and (\ref{3.1.1}), we obtain
\begin{align}\label{3.2}
C(Q)=(-1)^{m}\left[\left(\frac{\alpha a}{4}\right)^{N}+\left(\frac{a^{-1}}{4}\right)^{N}\right]
\qquad\qquad\qquad\qquad\qquad\;\;\,\nonumber\\
\small{\times\lim_{\kappa\rightarrow+\infty}\Bigg\{\kappa^{N-2m}e^{\frac{\kappa N\pi(1+\alpha)}{2}}
\Bigg[\prod_{\{n:\lambda_{n}(Q)\neq0\}}\left(1+\frac{\kappa^{2}}{\lambda_{n}(Q)}\right)\Bigg]^{-1}\Bigg\}}.\!\!\!\!\!\!\!\!\!\!\!
\end{align}
With the help of (\ref{3.1}) and (\ref{3.2}), the proof of Lemma \ref{lem1} is complete.
\end{proof}

\begin{lemma}\label{lem2}
If $\lambda_{n}(Q)=\lambda_{n}(0)$ for all $n=0,1,2\cdots$, then
\begin{align}
tr[Q_{1}]=tr[Q_{2}]=0.\nonumber
\end{align}
\end{lemma}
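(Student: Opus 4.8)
The plan is to exploit the hypothesis $\lambda_n(Q)=\lambda_n(0)$ through Lemma \ref{lem1}: since the eigenvalues coincide, the characteristic functions $\omega_Q$ and $\omega_0$ agree up to the multiplicative constant determined by those eigenvalues, and by \eqref{3.1.1} (or equivalently \eqref{3.2}) the leading coefficient as $\kappa\to+\infty$ is the same for both, so in fact $\omega_Q(\lambda)=\omega_0(\lambda)$ identically. Feeding this into the decomposition \eqref{2.10}, namely $\omega_Q(\lambda)=\omega_0(\lambda)+(\sqrt\lambda)^{N-1}\bigl[G(\lambda)+\psi(\lambda)\bigr]$, we get $G(\lambda)+\psi(\lambda)\equiv 0$, hence $G(\lambda)=-\psi(\lambda)$.

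The next step is an asymptotic argument to kill $G$. On the real axis $\psi(\lambda)=o(1)$ as $\lambda\to+\infty$ by the Paley-Wiener bound, whereas $G(\lambda)$ is (by its explicit form) a finite linear combination of products of $\cos$ and powers of $\sin$ in the two frequencies $\frac{\sqrt\lambda\pi(\alpha+1)}{2}$ and $\frac{\sqrt\lambda\pi(\alpha-1)}{2}$, with coefficients built linearly from $\operatorname{tr}[Q_1]$ and $\operatorname{tr}[Q_2]$. Writing $G$ out via product-to-sum identities as a generalized trigonometric polynomial $\sum_k c_k \cos(\mu_k\sqrt\lambda)$ (plus possibly a constant term) with distinct frequencies $\mu_k\ge 0$, the identity $G(\lambda)=-\psi(\lambda)=o(1)$ along $\lambda\to+\infty$ forces every coefficient $c_k$ to vanish: one extracts each $c_k$ by an averaging/limiting procedure, e.g. $\lim_{T\to\infty}\frac1T\int_1^T G(t^2)\cos(\mu_k t)\,dt = \tfrac12 c_k$ (with the appropriate normalization for $\mu_k=0$), and the right side is $0$ because $\psi(t^2)\to 0$. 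This reduces the lemma to a linear-algebra statement: the vanishing of all these coefficients is a homogeneous linear system in the two scalars $\operatorname{tr}[Q_1]$ and $\operatorname{tr}[Q_2]$.

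It then remains to check that this linear system has only the trivial solution, using $0<\alpha<1$ and $a>0$. Concretely, from the $A(\lambda)$ and $B(\lambda)$ formulas the $\cos\frac{\sqrt\lambda\pi(\alpha+1)}{2}$-coefficient and the $\cos\frac{\sqrt\lambda\pi(\alpha-1)}{2}$-coefficient of $A$ are $\tfrac12 a(\operatorname{tr}[Q_2]+\alpha\operatorname{tr}[Q_1])$ and $\tfrac12 a(\operatorname{tr}[Q_2]-\alpha\operatorname{tr}[Q_1])$; setting the sum and difference to zero gives $\operatorname{tr}[Q_2]=\alpha\operatorname{tr}[Q_1]=0$, and since $\alpha\neq 0$ we conclude $\operatorname{tr}[Q_1]=\operatorname{tr}[Q_2]=0$. (One must first make sure the relevant products of $\sin$-factors multiplying $A(\lambda)$ and $B(\lambda)$ in $G$ are not identically zero — this uses $\alpha\pm 1\neq 0$ — and that the frequencies $\mu_k$ one is reading coefficients off are genuinely present and distinct, so that the extraction is valid; the case $N=1$, where the $\sin$-power exponent $N-1$ is zero, should be noted as the cleanest instance.) The main obstacle I anticipate is precisely this bookkeeping: carefully expanding $G$ into a sum over distinct frequencies, confirming no accidental cancellations among the frequencies produced by the two angles $\frac{\sqrt\lambda\pi(\alpha\pm1)}{2}$ and their multiples, and thereby justifying that reading off coefficients really yields a solvable system pinning down both traces.
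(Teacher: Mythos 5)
Your first half matches the paper: Lemma \ref{lem1} gives $\omega_{Q}\equiv\omega_{0}$, hence $G+\psi\equiv0$, and since $G$ is a finite trigonometric polynomial in $\sqrt{\lambda}$ while $\psi=o(1)$ on the real axis, all Bohr--Fourier coefficients of $G$ vanish, i.e.\ $G\equiv0$. The gap is in your final linear-algebra step. You propose to read off the $\cos\frac{\sqrt{\lambda}\pi(\alpha+1)}{2}$- and $\cos\frac{\sqrt{\lambda}\pi(\alpha-1)}{2}$-coefficients \emph{of $A$} and set their sum and difference to zero; but $A$ and $B$ are multiplied by different $\sin$-powers and then added, so the two blocks contribute to the \emph{same} frequencies of $G$, and the combined coefficient of a given frequency can vanish identically for special $(a,\alpha)$. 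Concretely, for $N=1$ one has $G=\tfrac12(\operatorname{tr}[Q_{2}]+\alpha \operatorname{tr}[Q_{1}])\bigl(a+\tfrac{1}{a\alpha}\bigr)\cos\frac{\sqrt{\lambda}\pi(\alpha+1)}{2}+\tfrac12(\operatorname{tr}[Q_{2}]-\alpha \operatorname{tr}[Q_{1}])\bigl(a-\tfrac{1}{a\alpha}\bigr)\cos\frac{\sqrt{\lambda}\pi(\alpha-1)}{2}$; when $a^{2}\alpha=1$ the second coefficient carries the factor $0$, so $G\equiv0$ yields only the single relation $\operatorname{tr}[Q_{2}]+\alpha \operatorname{tr}[Q_{1}]=0$ and your system has rank one. (You single out $N=1$ as the cleanest instance; it is exactly where this degeneracy bites.) For general $N$ the only frequency whose coefficient is \emph{a priori} a nonzero multiple of a trace combination is the top one, $\frac{N\pi(1+\alpha)}{2}$, giving $\operatorname{tr}[Q_{2}]+\alpha \operatorname{tr}[Q_{1}]=0$; the natural candidate for the second equation, the frequency $\frac{N\pi(1-\alpha)}{2}$, carries the factor $(\alpha a)^{N-1}a+(-1)^{N}a^{-N}\alpha^{-1}$, which vanishes for odd $N$ when $a^{2}\alpha=1$. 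On top of this, for rational $\alpha$ distinct monomials collide in frequency, so the "bookkeeping" you defer is not routine --- it is the entire remaining content of the lemma.

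The paper sidesteps all of this by not matching coefficients frequency by frequency. It first evaluates $G+\psi=0$ at $\sqrt{\lambda}=2n$, where $\sin\frac{\sqrt{\lambda}\pi}{2}=0$ annihilates the whole $B$-block (its $\sin$-power vanishes), isolating an expression proportional to $\operatorname{tr}[Q_{2}]\cos(n\pi\alpha)\sin^{N-1}(n\pi\alpha)$; a subsequence argument with separate cases ($\alpha$ irrational, $\alpha=q/p\ne\frac12$, $\alpha=\frac12$) gives $\operatorname{tr}[Q_{2}]=0$. Only then does it let $\sqrt{\lambda}=i\kappa\to i\infty$, where the surviving coefficient is $(\alpha a)^{N}+a^{-N}>0$ for \emph{all} admissible $a,\alpha$, forcing $\operatorname{tr}[Q_{1}]=0$. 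To complete your route you would need either such a device or an honest proof that the full system of vanishing Bohr coefficients has rank two for every $N$, $a>0$, $0<\alpha<1$, with the degenerate configurations treated explicitly; as written, the proposal does not establish the second independent relation.
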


\begin{proof}
Since $\lambda_{n}(Q)=\lambda_{n}(0)$ for all $n=0,1,2\cdots$, Lemma \ref{lem1} tells that
$\omega_{Q}(\lambda)=\omega_{0}(\lambda)$. Thus, it follows from (\ref{2.10}) that for $\lambda\in\mathbb{C}$,
\begin{align}
G(\lambda)+\psi(\lambda)=0,  \nonumber
\end{align}
that is,
\begin{align}\label{3.3}
A(\lambda)\left(-\frac{\alpha a}{2}\right)^{N-1}
\Big(\sin\frac{\sqrt{\lambda}\pi(\alpha+1)}{2}+\sin\frac{\sqrt{\lambda}\pi(\alpha-1)}{2}\Big)^{N-1}
\nonumber\\
+B(\lambda)\left(-\frac{a^{-1}}{2}\right)^{N-1}\Big(\sin\frac{\sqrt{\lambda}\pi(\alpha+1)}{2}
-\sin\frac{\sqrt{\lambda}\pi(\alpha-1)}{2}\Big)^{N-1} \!\!\!\!\!\!\!\!\! \\
+\psi(\lambda)=0.\nonumber\qquad\qquad\qquad\qquad\qquad\qquad\qquad\qquad\qquad\qquad\;\:
\end{align}

In particular, taking $\sqrt{\lambda}=2n$ ($n\in\mathbb{N}$), then the equation (\ref{3.3}) is transformed into
\begin{align}\label{3.4}
a\,tr[Q_{2}]\cos(n\pi\alpha)\Big(2\sin(n\pi\alpha)\Big)^{N-1}+o(1)=0,\quad n\rightarrow+\infty.
\end{align}

When the impulse parameter $\alpha\in(0,1)$ is a irrational number, then (\ref{3.4}) deduces $tr[Q_{2}]=0$.

When the impulse parameter $\alpha\in(0,1)$ is a rational number, denoting $\alpha=\frac{q}{p}$ ( $p$, $q$ are coprime natural number and $q<p$ ). Moreover, if $\alpha=\frac{q}{p}\neq\frac{1}{2}$, then taking $n=pk+1$ ($k\in\mathbb{N}$) in (\ref{3.4}) yields
\begin{align}
a\,tr[Q_{2}]\cos\frac{q\pi}{p}\Big(2\sin\frac{q\pi}{p}\Big)^{N-1}+o(1)=0,\quad k\rightarrow+\infty,\nonumber
\end{align}
which deduces $tr[Q_{2}]=0$.

If $\alpha=\frac{q}{p}=\frac{1}{2}$, then taking $\sqrt{\lambda}=8n+1$ ($n\in\mathbb{N}$), the equation (\ref{3.3}) is transformed into
\begin{align}
a^{-1}tr[Q_{2}]+o(1)=0,\quad n\rightarrow+\infty,\nonumber
\end{align}
which yields $tr[Q_{2}]=0$.

Substituting $tr[Q_{2}]=0$ into the equation (\ref{3.3}), we obtain
\begin{align}
tr[Q_{1}]\Bigg\{(\alpha a)^{N}\Big(\sin\frac{\sqrt{\lambda}\pi\alpha}{2}\Big)^{N}
\Big(\cos\frac{\sqrt{\lambda}\pi}{2}\Big)^{N-1}\sin\frac{\sqrt{\lambda}\pi}{2}\qquad\;\;\; \nonumber \\
-a^{-N}\Big(\cos\frac{\sqrt{\lambda}\pi\alpha}{2}\Big)^{N}
\Big(\sin\frac{\sqrt{\lambda}\pi}{2}\Big)^{N-1}\cos\frac{\sqrt{\lambda}\pi}{2}\Bigg\}
+\psi(\lambda)=0, \nonumber
\end{align}
which is equivalent to
\begin{align}\label{3.4.1}
tr[Q_{1}]\Bigg\{(\alpha a)^{N}\frac{\Big(\sin\frac{\sqrt{\lambda}\pi\alpha}{2}\Big)^{N}
\Big(\cos\frac{\sqrt{\lambda}\pi}{2}\Big)^{N-1}\sin\frac{\sqrt{\lambda}\pi}{2}}{e^{\frac{\tau N\pi(1+\alpha)}{2}}}\nonumber\qquad\qquad\quad \\
-a^{-N}\frac{\Big(\cos\frac{\sqrt{\lambda}\pi\alpha}{2}\Big)^{N}
\Big(\sin\frac{\sqrt{\lambda}\pi}{2}\Big)^{N-1}\cos\frac{\sqrt{\lambda}\pi}{2}}{e^{\frac{\tau N\pi(1+\alpha)}{2}}}\Bigg\}
+\frac{\psi(\lambda)}{e^{\frac{\tau N\pi(1+\alpha)}{2}}}=0,
\end{align}
where $\tau=|Im\sqrt{\lambda}|$.

In particular, taking $\sqrt{\lambda}=i\kappa$ ($\kappa>0$) in (\ref{3.4.1}) and with the help of (\ref{2.10.1}), we get as $\kappa\rightarrow+\infty$,
\begin{align}
tr[Q_{1}]\Bigg\{(\alpha a)^{N}\left(\frac{e^{-\kappa\pi\alpha}-1}{2i}\right)^{N}
\left(\frac{e^{-\kappa\pi}+1}{2}\right)^{N-1}\left(\frac{e^{-\kappa\pi}-1}{2i}\right)\qquad\;\;\nonumber \\
-a^{-N}\left(\frac{e^{-\kappa\pi\alpha}+1}{2}\right)^{N}
\left(\frac{e^{-\kappa\pi}-1}{2i}\right)^{N-1}\left(\frac{e^{-\kappa\pi}+1}{2}\right)\Bigg\}+o(1)=0.\!\nonumber
\end{align}
Letting $\kappa\rightarrow+\infty$ in the above equation yields
\begin{align}
tr[Q_{1}]\left[(\alpha a)^{N}+a^{-N}\right]=0,\nonumber
\end{align}
that is,
\begin{align}
tr[Q_{1}]=0.\nonumber
\end{align}
The proof of Lemma \ref{lem2} is complete.
\end{proof}

Introducing the Hilbert space $L_{N}^{2}(0,\pi):=\oplus_{i=1}^{N}L^{2}(0,\pi)$ with the inner product
\begin{align}
(f,g)=\int_{0}^{\pi}g^{\dag}(x)f(x)dx=\sum_{i=1}^{N}\int_{0}^{\pi}f_{i}(x)\overline{g_{i}(x)}dx,\nonumber
\end{align}
where $f=(f_{1},\cdots,f_{N})^{t}\in L_{N}^{2}(0,\pi)$, $g=(g_{1},\cdots,g_{N})^{t}\in L_{N}^{2}(0,\pi)$, and $g^{t}$ denotes the transpose of the vector $g$, and $g^{\dag}$ denotes the conjugate transpose of the vector $g$. Clearly, $f=(f_{1},\cdots,f_{N})^{t}\in L_{N}^{2}(0,\pi)$ if and only if $f_{i}\in L^{2}(0,\pi),\;i=1,\cdots,N$. The domain of self-adjoint operator $L(Q)$ is
\begin{align}
D(L(Q))=\Big\{Y\in L_{N}^{2}(0,\pi)\:|\:\small{Y'\in AC_{N,\,loc}\left(\Big(0,\frac{\pi}{2}\Big)\cup\Big(\frac{\pi}{2},\pi\Big)\right)}, \qquad\;\;\nonumber\\
\:lY\in L_{N}^{2}(0,\pi), \;Y satisfying \;(\ref{intro2}) \;and \;(\ref{intro3})\Big\}.\nonumber\!\!\!\!\!
\end{align}

Now we give the proof of Theorem \ref{th}.
\\

 \textit{Proof of Theorem \ref{th}}.  It is easy to see that the operator $L(0)$ is non-negative and $0\in\sigma(0)$, so zero is its smallest eigenvalue, namely $\lambda_{0}(0)=0$.

Let
\begin{align}
 Y_{i}(x)=\begin{cases}
          e_{i},\:\:\,\, 0<x<\frac{\pi}{2},\\
          ae_{i},\:\,\frac{\pi}{2}<x<\pi,
        \end{cases}\nonumber
\end{align}
where $e_{i}$ is the $N\times1$ unit vector, whose the $i$-st component is $1\;(i=1,2,\cdots,N)$.

By the variational principle, we obtain
\begin{align}\label{3.5}
0=\lambda_{0}(0)=\lambda_{0}(Q)\qquad\qquad\qquad\qquad\qquad\qquad\qquad\nonumber\\
=\inf_{0\neq Y\in D(L(Q))}\frac{\int_{0}^{\pi}\left[-Y^{\dag}(x)Y''(x)+Y^{\dag}(x)Q(x)Y(x)\right]dx}
{\int_{0}^{\pi}\left[\rho(x)Y^{\dag}(x)Y(x)\right]dx}\!\!\!\!\!\!\!\!\!\!\!\!\!\!\!\!\!\nonumber\\
\leq\frac{\int_{0}^{\pi}\big[-Y^{\dag}_{i}(x)Y''_{i}(x)+Y^{\dag}_{i}(x)Q(x)Y_{i}(x)\big]dx}
{\int_{0}^{\pi}\big[\rho(x)Y^{\dag}_{i}(x)Y_{i}(x)\big]dx}\qquad\\
=\frac{\int_{0}^{\frac{\pi}{2}}Q_{1ii}(x)dx+a^{2}\int_{0}^{\frac{\pi}{2}}Q_{2ii}(x)dx}
{\frac{\pi}{2}(1+\alpha^{2}a)}\qquad\qquad\qquad\nonumber\\
=\frac{2[Q_{1}]_{ii}+2a^{2}[Q_{2}]_{ii}}{\frac{\pi}{2}(1+\alpha^{2}a)},\qquad i=1,2,\cdots,N.\nonumber \qquad\quad
\end{align}
Note that $M_{ii}$ represents the entry of matrix $M$ at the $i$-st row and $i$-st column. Adding $N$ inequalities in (\ref{3.5}), together with Lemma \ref{lem2}, yields that
\begin{align}
0\leq\frac{2tr[Q_{1}]+2a^{2}tr[Q_{2}]}{\frac{\pi}{2}(1+\alpha^{2}a)}=0.\nonumber
\end{align}
Therefore each of right-hand side in (\ref{3.5}) vanishes. So $Y_{i}(x)\;(i=1,2,\cdots,N)$ is the eigenfunction corresponding to the first eigenvalue 0 of $L(Q)$.

Substituting $Y_{i}(x)$ into equation (\ref{intro1}), we have
\begin{align}
Q(x)e_{i}=0 \; a.e. \;x\in(0,\pi),\quad i=1,2,\cdots N, \nonumber
\end{align}
which are equivalent to
\begin{align}
Q(x)I=0 \; a.e. \;x\in(0,\pi). \nonumber
\end{align}
Thus $Q(x)=0$ a.e. $x\in(0,\pi)$. The proof of Theorem \ref{th} is complete. $\;\;\;\;\:\square$

\qquad
	
	\noindent {\bf Acknowledgments.}
	This work was supported in part by  the National Natural Science Foundation of China (11871031) and the National Natural Science Foundation of Jiang Su (BK20201303).

\end{document}